\renewcommand{\epsilon}{\varepsilon}
\newcommand{\R}{\mathbb{R}}
\newcommand{\p}{\mathcal{P}}
\newtheorem{theorem}{Theorem}
\newtheorem{lemma}{Lemma}
\begin{document}

\author{Andriy V.\ Bondarenko
\footnote{Part of this work was done while the first author was on a
visit at Max Planck Institute for Mathematics, Bonn, Germany in April-May, 2008} and Maryna S. Viazovska}
\title{Spherical designs via Brouwer fixed point theorem}
\date{}
\maketitle
\noindent 1429 Stevenson Center\\
Vanderbilt University\\
Nashville, TN 37240\\
Tel. 615-343-6136\\
Fax 615-343-0215\\
\vspace{1cm}
Email: andriy.v.bondarenko@Vanderbilt.Edu\\
Max Planck Institute for Mathematics,\\
Vivatsgasse 7, 53111 Bonn, Germany\\ Tel. +49-228-402-265\\
Fax +49-228-402-275\\
Email: viazovsk@mpim-bonn.mpg.de
\newpage
\begin{abstract}
For each $N\ge c_dn^\frac{2d(d+1)}{d+2}$ we prove the existence of
a spherical $n$-design on $S^d$ consisting of $N$ points, where
$c_d$ is a constant depending only on $d$.
\end{abstract}
{\sl Keywords:} Spherical designs, Brouwer fixed point theorem,
Marcinkiewich-Zygmund inequality, area-regular partitions.\\

\newpage
\section {Introduction}

Let $S^d$ be the unit sphere in $\R^{{d+1}}$ with normalized
Lebesgue measure $d\mu_d$ $\left(\int_{S^d}d\mu_d(x)=1\right)$.
The following concept of a spherical design was introduced by
Delsarte, Goethals and Seidel ~\cite{DGS}:\\ A set of points
$x_1,\ldots,x_N\in S^d$ is called a {\sl spherical $n$-design} if
$$ \int_{S^d}P(x)d\mu_d(x)=\frac 1N\sum_{i=1}^N P(x_i) $$ for all
algebraic polynomials in $d+1$ variables and of total degree at
most $n$. For each $n\in {\mathbb N}$ denote by $N(d,n)$ the
minimal number of points in a spherical $n$-design.  The following
lower bounds
\begin{equation}
\label{hh}
N(d,n)\ge {{d+k}\choose{d}}+{{d+k-1}\choose{d}},\quad
n=2k,
\end{equation}
$$ N(d,n)\ge 2\,{{d+k}\choose{d}}, \quad n=2k+1, $$ are also
proved in \cite{DGS}.

Spherical $n$-designs attaining these bounds are called tight.
Exactly eight tight spherical designs are known for $d\ge 2$ and
$n\ge 4$. All such configurations of points are highly symmetrical
and possess  other extreme properties. For example, the shortest
vectors in the $E_8$ lattice form a tight 7-design in $S^7$, and a
tight 11-design in $S^{23}$ is obtained  from the Leech lattice in
the same way ~\cite{CS}. In general, lattices are a good source
for spherical designs with small $(d,n)$~\cite{PPV}.

 On the
other hand construction of spherical $n$-design with minimal
cardinality for fixed $d$ and $n\to\infty$ becomes a difficult
analytic problem even for $d=2$. There is a strong relation
between this problem and the problem of findind $N$ points on a
sphere $S^2$ that minimize the energy functional $$
E(\vec{x}_1,\ldots,\vec{x}_N)=\sum_{1\le i<j\le N}\frac
1{\|\vec{x}_i-\vec{x}_j\|},$$ see Saff, Kuijlaars~\cite{SK}.

 Let us begin by giving a short history of asymptotic upper bounds on $N(d,n)$ for fixed
$d$ and $n\to\infty$. First, Seymour and Zaslavsky \cite{SZ} have
proved that spherical design exists for all $d$, $n\in {\mathbb
N}$. Then, Wagner \cite{Wag} and Bajnok \cite{B} independently
proved that $N(d,n)\le c_dn^{Cd^4}$ and $N(d,n)\le c_dn^{Cd^3}$
respectively. Korevaar and Meyers have \cite{KM} improved this
inequalities by showing that $N(d,n)\le c_dn^{(d^2+d)/2}$. They
have also conjectured that $N(d,n)\le c_dn^d$. Note that
\eqref{hh} implies $N(d,n)\ge C_dn^d$. In what follows we denote
by $b_d$, $c_d$, $c_{1d}$, etc., sufficiently large constants
depending only on $d$. In~\cite{BV} we  proved the following\\
{\bf Theorem BV.} Let $a_d$ be the sequence defined by $$
a_1=1,\quad a_2=3, \quad a_{2d-1}=2a_{d-1}+d, \quad
a_{2d}=a_{d-1}+a_{d}+d+1,\quad d\ge 2. $$ Then for all $d$, $n\in
{\mathbb N}$, $$ N(d,n)\le c_dn^{a_d}. $$
\\ {\bf Corollary BV.} For each $d\ge 3$ and $n\in {\mathbb N}$ we have
$$ N(d,n)\le c_dn^{a_d}. $$ $$a_3\le 4, \quad a_4\le 7, \quad
a_5\le 9, \quad a_6\le 11, \quad a_7\le 12, \quad a_8\le 16, \quad
a_9\le 19, \quad a_{10}\le 22,$$ and $$ a_d< \frac d2\log_22d,
\quad d> 10. $$ In this paper we suggest a new nonconstructive
approach for obtaining new upper bounds for $N(d,n)$. We will make
extensive use of the Brouwer fixed point theorem (the source of
nonconstructive nature of our method), the Marcinkiewich-Zygmund
inequality on the sphere~\cite{MNW} and the notion of area-regular
partitions~\cite{SK2}. The main result of this paper is \\
\begin{theorem}\label{main} For each $N\ge c_dn^\frac{2d(d+1)}{d+2}$ there exists a spherical $n$-design on
$S^d$ consisting of $N$ points.
\end{theorem}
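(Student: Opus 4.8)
The idea is to reduce the existence of a spherical $n$-design of cardinality $N$ to a fixed-point statement on a compact convex set, and then apply Brouwer.

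First I would set things up as follows. Fix a parameter $m\sim n$ and consider the space $\Pi_n$ of polynomials of degree $\le n$ restricted to $S^d$, which has dimension $D\sim n^d$. Partition $S^d$ into $N$ area-regular cells $R_1,\dots,R_N$, each of measure $1/N$ and of diameter $O(N^{-1/d})$, using \cite{SK2}. The heuristic: if we place one point $x_i$ in each cell $R_i$, then $\frac1N\sum_i P(x_i)$ is a Riemann-sum approximation to $\int_{S^d}P\,d\mu_d$, with error controlled by the diameter of the cells times the gradient of $P$. By Bernstein's inequality the gradient of $P\in\Pi_n$ is $O(n)$ times $\|P\|_\infty$, so the error is $O(nN^{-1/d})$ per point, hence $O(nN^{-1/d})$ in total after averaging — but this is not small. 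The point of the area-regular partition together with the Marcinkiewicz–Zygmund inequality \cite{MNW} is that we should instead take $N$ much larger than $D$ and allow the cells to have \emph{unequal but still comparable} areas, or equivalently weight the points, so that exact quadrature is attainable as a perturbation of the equal-weight quadrature. Concretely, the plan is to choose $N\approx D\cdot(\text{something})$; matching the exponent $\frac{2d(d+1)}{d+2}$ in the statement will come out of balancing the MZ-mesh condition against the size of the perturbation we can absorb.

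Second, the core fixed-point construction. Let $\nu$ be the dimension of $\Pi_n$ minus one (we can always subtract the constants, or rather work with the orthogonal complement of constants). Define a map $f\colon (S^d)^N\to\R^\nu$ by
\[
 f(x_1,\dots,x_N)=\Bigl(\tfrac1N\sum_{i=1}^N Y_j(x_i)-\int_{S^d}Y_j\,d\mu_d\Bigr)_{j=1}^{\nu},
\]
where $Y_1,\dots,Y_\nu$ is an orthonormal basis of the spherical harmonics of degree $1,\dots,n$ (so the integrals vanish). A configuration is a spherical $n$-design iff $f=0$. Now restrict each $x_i$ to move within its cell $R_i$ (a contractible compact set), so the domain is the compact convex-\emph{like} set $\prod_i R_i$; after composing with a retraction / using a local chart on each $R_i$ one gets an honest continuous self-map of a closed ball or of $\prod_i R_i\subset(S^d)^N$, and one wants a point with $f=0$. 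The standard trick: consider the map $x\mapsto x-\lambda\,(\text{gradient-type correction built from }f(x))$ pushed back into $\prod_i R_i$, and show via Brouwer that it has a fixed point; at a fixed point the correction term must vanish, which forces $f(x)=0$ provided a nondegeneracy (surjectivity-of-the-differential, or a topological-degree) condition holds. The role of the Marcinkiewicz–Zygmund inequality is exactly to guarantee this nondegeneracy: it says that for $N$ points forming a sufficiently dense ($\delta$-)net, the sampling functional $P\mapsto\frac1N\sum P(x_i)$ is uniformly equivalent, on $\Pi_n$, to $\int_{S^d}P\,d\mu_d$ in $L^1$ (or $L^2$) norm, which in turn lets us bound the inverse of the relevant Jacobian / verify the boundary condition needed for Brouwer.

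Third, the quantitative bookkeeping. One needs: (i) the MZ inequality holds once the mesh norm is $\lesssim c/n$, i.e.\ once $N\gtrsim n^d$ but with the \emph{right} geometry; (ii) the perturbation we must absorb — the gap between the naive equal-weight quadrature and exact quadrature — is of size controlled by $\text{diam}(R_i)\cdot n\sim n/N^{1/d}$ measured against the "width" of the image region of $f$, which by MZ is bounded below by a constant; (iii) for the homotopy/degree argument one needs the cells large enough (so the points have room to move and realize all needed corrections) yet the quadrature error small enough — and balancing "large enough room" ($\text{vol}(R_i)\sim 1/N$, diam $\sim N^{-1/d}$, so the achievable variation of $f$ is $\sim n\cdot N^{-1/d}$ in each of $\nu\sim n^d$ coordinates) against "error small enough" is precisely where the exponent $\tfrac{2d(d+1)}{d+2}$ is forced: roughly, we need $n^d\cdot(\text{per-coordinate error})\lesssim n\cdot N^{-1/d}$ while also $N^{-1/d}\lesssim 1/n$, and optimizing over the split of $N$ between "number of cells" and "resolution within cells" yields the stated power. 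The main obstacle, I expect, is step two done rigorously: converting $f=0$ into a genuine Brouwer fixed-point problem requires a careful choice of the auxiliary map so that (a) it is a continuous self-map of $\prod_i R_i$ and (b) its fixed points are exactly the zeros of $f$ — the subtlety is that $f$ maps into $\R^\nu$ with $\nu$ possibly much larger than the "degrees of freedom we spend per cell", so one must instead think of the points as carrying a small \emph{weight} perturbation too, or split the coordinates of $f$ among the cells, and then verify the nondegeneracy uniformly; this is exactly where the Marcinkiewicz–Zygmund inequality and the area-regularity of the partition must be combined with care.
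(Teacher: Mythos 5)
Your high-level ingredients match the paper's: area-regular partitions from \cite{SK2}, the Marcinkiewicz--Zygmund inequality from \cite{MNW}, Brouwer's theorem, and the observation that a design is a zero of the map $\Phi(x_1,\ldots,x_N)=\frac1N\sum_i G_{x_i}$ into $\mathcal{P}_n$ (your $f$ is the coordinate version of this in a spherical-harmonic basis). You also put your finger on the real obstacle: the target has dimension $\nu\sim n^d$, and one needs a genuine self-map on which Brouwer applies, together with a quantitative nondegeneracy of the ``linear part.'' But you stop there, and the fallbacks you float --- splitting the coordinates of $f$ among the cells, or adding small weight perturbations --- are not the mechanism that makes the argument close, and they would not close it: a single cell carries only $d$ free parameters against $\nu\gg d$ target coordinates, and there is no natural way to allocate harmonics to cells so that the resulting block map is uniformly invertible.

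The idea you are missing is to parametrize a $\nu$-dimensional family of configurations by $\mathcal{P}_n$ itself: define $X:\mathcal{P}_n\to(S^d)^N$ by flowing each initial point $x_i$ along the geodesic in the direction $\nabla P(x_i)$ by arc length $\|\nabla P(x_i)\|$. Then $L=\Phi\circ X:\mathcal{P}_n\to\mathcal{P}_n$ is a map between two copies of the \emph{same} $\nu$-dimensional space, and one Taylor-expands $L(P)=L(0)+L'(P)+L''(P)$. With the specific weights $w_k=k(k+d-1)$ one gets $\langle L'(P),P\rangle_w=\frac1N\sum_i\|\nabla P(x_i)\|^2$, and the MZ inequality plus Green's identity \eqref{grad} give $\frac1N\sum_i\|\nabla P(x_i)\|^2\ge\frac12\int_{S^d}\|\nabla P\|^2\,d\mu_d=\frac12\|P\|_w^2$, so $L'$ is invertible with a uniform lower bound; this is the role of MZ, not a degree/homotopy condition on the raw map $f$. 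The quadratic remainder is controlled by the spherical Hessian bound (Lemma~\ref{norm}), giving $\|L''(P)\|_w\lesssim n^{(d+2)/2}\|P\|_w^2$, and a quantitative Brouwer lemma (Lemma~\ref{ball}) shows the image of $L$ covers a ball of radius $\sim n^{-(d+2)/2}$ around $L(0)$. Separately, an averaging (variance) argument over an area-regular partition supplies an initial configuration with $\|L(0)\|_w\lesssim n^{d/2}N^{-1/2-1/d}$; the extra $N^{-1/2}$ factor here is crucial and is not captured by your diameter-times-gradient heuristic, which would give only $nN^{-1/d}$. Requiring $n^{d/2}N^{-1/2-1/d}\lesssim n^{-(d+2)/2}$ is exactly what produces the exponent $\frac{2d(d+1)}{d+2}$, so your bookkeeping in step (iii) needs to be replaced by this two-term comparison.
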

This result improves our previous estimate on $N(d,n)$ for all
$d>3$, $d\neq 7$, and in particular allows us to remove the
"nasty" logarithm in the power in Corollary BV, so that the
function in the power has a linear behavior, which confirms the
conjecture of Korevaar and Meyers. Finally, Theorem~1 guaranties
the existence of spherical $n$-design for each $N$ greater then
our new existence bound.

\section{Preliminaries}
Let $\Delta$ be the Laplace operator in $\mathbb{R}^{d+1}$ $$
\Delta =\sum_{j=1}^{d+1}\frac{\partial^2}{\partial x_j^2}. $$ We
say that a polynomial $P$ in $\mathbb{R}^{d+1}$ is harmonic if
$\Delta P=0$. For integer $k\ge 1$, the restriction to $S^d$ of a
homogeneous harmonic polynomial of degree $k$ is called a
spherical harmonic of degree $k$. The vector space of all
spherical harmonics of degree $k$ will be denoted by
$\mathcal{H}_k$ (see~\cite{MNW} for  details). The dimension of
$\mathcal{H}_k$ is given by $$ \dim\,\mathcal{H}_k=
\frac{2k+d-1}{k+d-1}\binom{d+k-1}{k}. $$  The vector spaces
$\mathcal{H}_k$ are invariant under the action of the orthogonal
group $O(d+1)$ on $S^d$ and are orthogonal to each other with
respect to the scalar product $$\langle P,
Q\rangle:=\int_{S^d}P(x)Q(x)d\mu_d(x) .$$ Another remarkable
property of harmonic polynomials is that the spaces
$\mathcal{H}_k$ are eigenspaces of  the spherical Laplacian
(Laplace-Beltrami operator~\cite{Dz})
\begin{equation}
\label{Be}
\widetilde{\Delta}f(x):=\Delta f(\frac
{x}{\|x\|}).
\end{equation}
Thus, for a polynomial $P\in\mathcal{H}_k$ we have
\begin{equation}\label{eigv}
\widetilde{\Delta}P=-k(k+d-1)P.
\end{equation}
Here and below we use the notations $\|\cdot\|$ and $(\cdot,
\cdot)$ for the Euclidean norm and usual scalar product in
$\mathbb{R}^{d+1}$, respectively. For a twice differentiable
function\\ $f: \R^{d+1}\to \R$ and a point
$x_0\in\mathbb{R}^{d+1}$ denote by $$ \frac{\partial f}{\partial
x}(x_0):=\left(\frac{\partial f}{\partial x_1}(x_0),\ldots
\frac{\partial f}{\partial x_{d+1}}(x_0)\right) $$ and $$
\frac{\partial^2 f}{\partial x^2}(x_0):=\left(\frac{\partial^2
f}{\partial x_i\partial x_j}(x_0)\right)_{i,j=1}^{d+1} $$ the
gradient and the matrix of second derivatives of $f$ (Hessian
matrix) at the point $x_0$ respectively. Analogously to~\eqref{Be}
we will also define for a polynomial $Q\in\p_n$ the spherical
gradient $$\nabla Q(x):=\frac{\partial}{\partial x}Q(\frac
{x}{\|x\|})$$ and the Hessian matrix on the sphere
\begin{equation}\label{Hes}
\nabla^2 Q(x):=\frac{\partial^2 }{\partial x^2}Q(\frac
{x}{\|x\|}).
\end{equation} We will also write $$\nabla^2Q\cdot x\cdot
y:=(\nabla^2Q\cdot x, y)\qquad\mbox{for}\;x,y\in \mathbb{R}^{d+1}.
$$
  One consequence of Stokes's theorem is the
first Green's identity \cite{W}
\begin{equation}\label{grad}
\int_{S^d}P(x)\widetilde{\Delta}Q(x)d\mu_d(x)=-\int_{S^d}( \nabla
P(x),\nabla Q(x)) d\mu_d(x).
\end{equation}

 Let $\mathcal{P}_n$ be the vector space of
polynomials $P$ of degree $\leq n$ on $S^{d}$ such that $$
\int_{S^d}P(x)d\mu_d(x)=0. $$ Each polynomial in
$\mathbb{R}^{d+1}$ can be written as a finite sum of terms, each
of which is a product of a harmonic and a radial polynomial (i.e.
a polynomial which depends only on $\|x\|$). Therefore the vector
space $\p_n$ decomposes into the direct sum  $\mathcal{H}_k$
$$\p_n=\bigoplus_{k=1}^n \mathcal{H}_k.$$ For each vector of
positive weights $w=(w_1,\ldots,w_n)$ we can define a scalar
product $\langle\cdot,\cdot \rangle_w$ on $\p_n$ invariant with
respect to the action of $O(d+1)$ on $S^d$ by $$\langle P,Q
\rangle_w:=\sum_{k=1}^n w_k\langle P_k, Q_k\rangle,$$ where $P_k$,
$Q_k\in\mathcal{H}_k$, $P=P_1+\ldots+P_n$ and $Q=Q_1+\ldots+Q_n$.
For each $Q\in\p_n$ denote by $$ \|Q\|_w=\sqrt{\langle
Q,Q\rangle_w} $$ the norm corresponding to this scalar product. We
will also define the operator
$$\Delta_wP:=\sum_{k=1}^n\frac{k(k+d-1)}{w_k}P_k,\;P\in\p_n.$$
Then from (\ref{eigv}) and (\ref{grad}) we get
\begin{equation}
\label{lap}
\langle \Delta_w
P,Q\rangle_w=\int_{S^d}\langle \nabla P(x),\nabla Q(x)\rangle
d\mu_d(x).
\end{equation}

  Now, for each point $x\in S^d$ there exists a unique
polynomial $G_x\in \p_n$ (depending on $w$) such that $$\langle
G_x,Q\rangle_w=Q(x) \;\;\mbox{for all}\;\;Q\in\p_n.$$ Then, the
set of points $x_1,\ldots,x_N\in S^d$ form a spherical design if
and only if $$G_{x_1}+\ldots+G_{x_N}=0 .$$  To construct the
polynomials $G_x$ explicitly we will use the  Gegenbauer
polynomials $G^{\alpha}_k$~\cite{AS}. For a fixed $\alpha$, the
$G^{\alpha}_k$ are orthogonal on $[-1,1]$ with respect to the
weight function $\omega (t)=(1-t^2)^{{\alpha}-\frac12}$, that is
$$
\int_{-1}^1G^{\alpha}_m(t)G^{\alpha}_n(t)(1-t^2)^{{\alpha}-\frac12}dt=\delta_{mn}
\frac{\pi2^{1-2\alpha}\Gamma(n+2\alpha)}{n!(\alpha+n)\Gamma^2(\alpha)}.
$$ Set $\alpha:=\frac{d-1}2$, and let $$G_x(y):=g_w((x,y)),$$
where  $$
g_w(t):=\sum_{k=1}^n\frac{\dim\,\mathcal{H}_k}{w_kG^{\alpha}_k(1)}G^{\alpha}_k(t).
$$ In order to show that $\langle P_x,Q\rangle_w=G_x(Q)=Q(x)$ for
each $Q\in\p_n$ we will use the following identity for Gegenbauer
polynomials~\cite{MNW}
\begin{equation}
\label{g1}
G^{\alpha}_k((x,y))=\frac{G^{\alpha}_k(1)}{\dim\,\mathcal{H}_k}\sum_{j=1}^{\dim\,\mathcal{H}_k}
Y_{jk}(x)Y_{jk}(y),
\end{equation}
where $x,y\in S^d$ and $Y_{jk}$ are some orthonormal basis in the
space $(\mathcal{H}_k, \mu_d )$. In particular, for a fixed $x\in
S^d$, $G^{\alpha}_k((x,y))\in\mathcal{H}_k$. Therefore, for a
polynomial  $Q\in\p_n$ we have $$ \langle
G_x,Q\rangle_w=\sum_{k=1}^n w_i\langle G_k, Q_k\rangle=
\sum_{k=1}^n\int_{S^d}G^{\alpha}_k((x,y))Q_k(y)d\mu_d(y)= $$ $$
=\sum_{k=1}^n\sum_{j=1}^{\dim\,\mathcal{H}_k}Y_{jk}(x)\int_{S^d}Q_k(y)Y_{jk}(y)d\mu_d(y)
=\sum_{k=1}^nQ_k(x)=Q(x). $$

Fix the weight vector $w=(w_1,\ldots,w_n)$ such that
$w_k=k(k+d-1)$. Further we will use the following additional
equalities for Gegenbauer polynomials~\cite{AS}: $$
G^{\alpha}_n(1)=\binom{2\alpha+n-1}{n}, $$ and
\begin{equation}
\label{g2}
\frac d{dt}G^{\alpha}_n(t)=2\alpha G^{\alpha+1}_{n-1}(t), \qquad
\frac {d^2}{dt^2}G^{\alpha}_n(t)=4\alpha(\alpha+1)G^{\alpha+2}_{n-2}(t).
\end{equation}
Applying Cauchy's  inequality to ~\eqref{g1} we get, for all
$k\in\mathbb{N}$ and $x,y\in S^d$, $$ |G^{\alpha}_k((x,y))|^2\le
G^{\alpha}_k((x,x))G^{\alpha}_k((y,y)), $$ and hence $$
\max_{x\in[-1,1]}|g_w(x)|=g_w(1).$$ Similarly, by ~\eqref{g2} we
obtain
\begin{equation}
\label{g3} \max_{x\in[-1,1]}|g'_w(x)|=g'_w(1).
\end{equation}
Finally, let us estimate $g_w'(1)$ and $g_w''(1)$. We have
\begin{equation}
\label{g4}
g_w'(1)=\sum_{k=1}^n\frac{\dim\,\mathcal{H}_k}{w_kG^{\alpha}_k(1)}{G_k^{\alpha}}^{\prime}(1)=
\sum_{k=1}^n\frac{(2k+d-1)(k+d-2)!}{k!d!}\le c_{1d}n^d.
\end{equation}
Hence, by~\eqref{g3} and Markov inequality we get
\begin{equation}
\label{g5} g''_w(1)<n^2\max_{x\in[-1,1]}|g'_w(x)|=n^2g'_w(1)\le
c_{1d}n^{d+2}.
\end{equation}
\section{Proof of Theorem \ref{main}}

Fix $n\in\mathbb{N}$. As  mentioned in section 2, points
$x_1,\ldots,x_N$ form a spherical $n$-design if and only if
$G_{x_1}+\ldots+G_{x_N}=0$. First we will construct a set of
points such that the norm $\|G_{x_1}+\ldots +G_{x_N}\|_w$ is
small, and then we will use the Brouwer fixed point theorem to
show that there exists a collection of points $\{y_1,\ldots,y_N\}$
``close'' to $\{x_1,\ldots,x_N\}$ with $\|G_{y_1}+\ldots
+G_{y_N}\|_w=0$.

 Let
$\mathcal{R}=\left\{R_1,\ldots, R_N\right\}$ be a finite
collection of closed, non-overlapping (i.e., having no common
interior points) regions $R_i\subset S^d$ such that $\cup_{i=1}^N
R_i=S^d$. The partition $\mathcal{R}$ is called area-regular if
$\mathrm{vol}R_i:=\int_{R_i}d\mu_d(x)=1/N$, for all
$i=1,\ldots,N$. The partition norm for $\mathcal{R}$ is defined by
$$ \|\mathcal{R}\|:=\max_{R\in\mathcal{R}}\mathrm{diam}\, R. $$
Now we will prove
\begin{lemma}\label{l1} For each $N\in\mathbb{N}$ there exists an area-regular partition $\mathcal{R}=\left\{R_1,\ldots,
R_N\right\}$  of $S^d$ and a collection of points $x_i\in R_i$,
$i=1,\ldots,N$ such that
$$\left\|\frac{G_{x_1}+\ldots+G_{x_N}}{N}\right\|_w\leq\frac {b_d
n^{d/2}}{N^{1/2+1/d}}. $$
\end{lemma}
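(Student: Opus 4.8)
The plan is to start from any area-regular partition $\mathcal{R}$ of $S^d$ with small partition norm, choose the points $x_i\in R_i$ cleverly, and then bound $\|\frac1N\sum_i G_{x_i}\|_w$ by testing against an arbitrary $Q\in\p_n$. The key observation is that for any $Q\in\p_n$,
$$
\left\langle \frac1N\sum_{i=1}^N G_{x_i},\,Q\right\rangle_w=\frac1N\sum_{i=1}^N Q(x_i)=\frac1N\sum_{i=1}^N Q(x_i)-\int_{S^d}Q\,d\mu_d,
$$
since $Q$ has zero mean. So the left-hand side is exactly a quadrature error, and I want to choose the $x_i$ so that this error is small uniformly over the unit ball of $\|\cdot\|_w$. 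The natural choice is to take $x_i$ to be a point that makes $\int_{R_i}Q\,d\mu_d$ agree with $\frac1N Q(x_i)$ as well as possible — concretely, the existence of such a point with the error controlled by the second derivatives of $Q$ on $R_i$ should follow from a mean-value/Taylor argument on each small cap $R_i$.

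The key steps, in order. First, invoke an area-regular partition with $\|\mathcal{R}\|\le C_d N^{-1/d}$ (this is the standard fact from \cite{SK2} about the existence of such partitions). Second, on each region $R_i$ write a second-order Taylor expansion of $Q$ around a well-chosen base point (say a point $x_i$ where the ``linear part'' integrates to zero over $R_i$, obtained by a continuity/intermediate-value argument, or simply the barycenter combined with averaging the gradient term), so that
$$
\left|\int_{R_i}Q\,d\mu_d-\frac1N Q(x_i)\right|\le \frac{1}{2N}\,\Big(\sup_{R_i}\|\nabla^2 Q\|\Big)\,\|\mathcal{R}\|^2 .
$$
Summing over $i$ gives
$$
\left|\frac1N\sum_{i=1}^N Q(x_i)-\int_{S^d}Q\,d\mu_d\right|\le \frac12\,\|\mathcal{R}\|^2\,\max_{x\in S^d}\|\nabla^2 Q(x)\| .
$$
Third, control $\max_{S^d}\|\nabla^2 Q\|$ in terms of $\|Q\|_w$. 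Here I would use the reproducing property: $\nabla^2 Q(x)=\nabla^2\langle G_x,Q\rangle_w$, so each entry of the Hessian is $\langle \partial^2 G_x,Q\rangle_w$, and Cauchy–Schwarz gives $\|\nabla^2 Q(x)\|\le \|Q\|_w\cdot\|\nabla^2 G_x\|_w$. Using \eqref{g2}, \eqref{g5} and the chain rule for $\nabla^2$ on the sphere, one estimates $\|\nabla^2 G_x\|_w^2=\langle \nabla^2 G_x,\nabla^2 G_x\rangle_w$ by a quantity of order $g_w''(1)\le c_{1d}n^{d+2}$ (the weights $w_k=k(k+d-1)$ are exactly arranged so the Laplacian-type sums telescope into $g_w''(1)$ and $g_w'(1)$). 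Hence $\max_{S^d}\|\nabla^2 Q\|\le b_d\,n^{(d+2)/2}\,\|Q\|_w$.

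Putting these together, for every $Q\in\p_n$,
$$
\left|\left\langle \frac{G_{x_1}+\cdots+G_{x_N}}{N},\,Q\right\rangle_w\right|\le \frac12\,C_d^2\,N^{-2/d}\cdot b_d\,n^{(d+2)/2}\,\|Q\|_w,
$$
and taking the supremum over $\|Q\|_w\le 1$ (equivalently $Q=\frac{G_{x_1}+\cdots+G_{x_N}}{N}$ itself) yields $\|\frac1N\sum_i G_{x_i}\|_w\le b_d\, n^{(d+2)/2}N^{-2/d}$. This is \emph{not yet} the claimed bound $b_d n^{d/2}N^{-1/2-1/d}$; to get that sharper estimate I expect one must square and exploit cancellation across the $N$ regions — i.e., estimate $\|\sum_i G_{x_i}\|_w^2=\sum_{i,j}\langle G_{x_i},G_{x_j}\rangle_w$ and use that, after the Taylor correction, the per-region errors behave like independent mean-zero contributions, gaining a factor $\sqrt N$ over the crude triangle-inequality bound. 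This averaging/orthogonality step — showing the error vector has squared norm $O(N\cdot(\text{per-region error})^2)$ rather than $O(N^2)$ — is the main obstacle, and it is where the precise choice of $x_i$ inside $R_i$ (killing the first-order term region by region) really matters. The remaining pieces — existence of the partition, the Taylor estimate, and the Hessian bound via \eqref{g5} — are routine given the machinery already set up in Section 2.
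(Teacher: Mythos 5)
Your proposal does not actually prove the stated bound, and you are candid about this: your Taylor-plus-Hessian argument yields only
$\left\|\frac1N\sum_i G_{x_i}\right\|_w\lesssim n^{(d+2)/2}N^{-2/d}$, whereas the Lemma claims $n^{d/2}N^{-1/2-1/d}$. You correctly diagnose that the missing $\sqrt N$ gain should come from squaring the norm and exploiting cancellation among the $N$ per-region errors, but you stop short of carrying that out, and you frame it as hinging on a ``precise choice of $x_i$'' that kills the first-order term region by region. That is a genuine gap, and also a misdiagnosis of where the cancellation comes from.

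The paper's proof is in fact \emph{simpler} than what you attempt: no clever choice of $x_i$ is needed at all. One fixes the area-regular partition with $\|\mathcal{R}\|\le c_{2d}N^{-1/d}$, and then computes the \emph{average} of $\left\|\frac1N\sum_i G_{x_i}\right\|_w^2$ over $x_i$ drawn uniformly and independently from $R_i$. Expanding the square as $\frac1{N^2}\sum_{i,j}\langle G_{x_i},G_{x_j}\rangle_w$, the off-diagonal terms $i\neq j$ integrate to $\int_{S^d\times S^d}\langle G_x,G_y\rangle_w\,d\mu_d(x)\,d\mu_d(y)=0$ — this cancellation is automatic, because $y\mapsto g_w((x,y))$ has zero mean, and has nothing to do with a careful placement of the $x_i$. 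What survives is exactly the diagonal correction $\sum_i\int_{R_i\times R_i}\bigl(g_w(1)-g_w((x,y))\bigr)\,d\mu_d(x)\,d\mu_d(y)$, which is bounded using only the \emph{first}-derivative estimate $|g_w(1)-g_w((x,y))|\le g_w'(1)\|x-y\|^2\lesssim n^d\|\mathcal R\|^2$ (not the Hessian bound $g_w''$). This yields the claimed $O(n^d/N^{1+2/d})$ for the average of the squared norm, hence existence of a good choice of $x_i$ by the probabilistic method. The upshot: you should replace the deterministic construction and second-order Taylor analysis with a direct first-moment computation of the squared $\|\cdot\|_w$-norm; the $\sqrt N$ gain then falls out of exact orthogonality rather than from choosing $x_i$ well, and only $g_w'(1)$ (not $g_w''(1)$) is needed at this stage.
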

\begin{proof}
As shown in ~\cite{SK2}, for each $N\in\mathbb{N}$ there exists an
area-regular partition $\mathcal{R}=\left\{R_1,\ldots,
R_N\right\}$ such that $\|\mathcal{R}\|\leq c_{2d}N^{1/d}$ for
some constant $c_{2d}$. For this partition $\mathcal{R}$ we will
estimate the average value of
$\left\|\frac{G_{x_1}+\ldots+G_{x_N}}{N}\right\|_w^2 $, when the
points $x_i$ are uniformly distributed over $R_i$. We have
$$\begin{array}{l} \displaystyle
\frac{1}{\mathrm{vol}R_1\cdots\mathrm{vol}R_N}\int_{R_1\times\cdots\times
R_N}
\left\|\frac{G_{x_1}+\ldots+G_{x_N}}{N}\right\|_w^2d\mu_d(x_1)\cdots
d\mu_d(x_N)= \\
\displaystyle\qquad{}=\frac{1}{\mathrm{vol}R_1\cdots\mathrm{vol}R_N}\int_{R_1\times\cdots\times
R_N}\frac{1}{N^2}\sum_{i,j=1}^N\langle G_{x_i},G_{x_j}\rangle_w
d\mu_d(x_1)\cdots d\mu_d(x_N)\\ \displaystyle \qquad{}=\sum_{i\neq
j}\int_{R_i\times R_j}\langle G_{x_i},G_{x_j}\rangle_w
d\mu_d(x_i)d\mu_d(x_j) +\sum_{i=1}^N \frac 1N \int_{R_i}\langle
G_{x_i}, G_{x_i}\rangle_w d\mu_d(x_i) \\  \displaystyle \qquad{}
=\int_{S^d\times S^d}\langle G_x, G_y\rangle_w d\mu_d(x)d\mu_d(y)+
\\  \displaystyle \qquad{}\qquad{} +\sum_{i=1}^N\left(\frac 1N
\int_{R_i}\langle G_x,G_x\rangle_w d\mu_d(x) -\int_{R_i\times
R_i}\langle G_x,G_y\rangle_w d\mu_d(x)d\mu_d(y)\right)\\
\displaystyle \qquad{} =\int_{S^d\times
S^d}g_w((x,y))d\mu_d(x)d\mu_d(y)+ \\  \displaystyle
\qquad{}\qquad{} +\sum_{i=1}^N\int_{R_i\times
R_i}g_w(1)-g_w((x,y))d\mu_d(x)d\mu_d(y). \end{array}$$

The first term of the sum is equal to zero because for each fixed
$x\in S^d$, the polynomial $g_w((x,y))\in\mathcal{P}_n$. We can
estimate the second term by $$ \sum_{i=1}^N\int_{R_i\times
R_i}g_w(1)-g_w((x,y))d\mu_d(x)d\mu_d(y)\le
\frac1N\max_{R_i\in\mathcal{R}}\max_{x,y\in
R_i}|g_w(1)-g_w((x,y))| $$ $$
\le\frac1N\max_{R_i\in\mathcal{R}}\max_{x,y\in
R_i}g'_w(1)\|x-y\|^2\le \frac1Nc_{1d}n^d\|\mathcal{R}\|^2\le
c_{1d}\frac{c_{2d}^2n^d}{N^{1+2/d}}, $$ where in the last line we
use~\eqref{g3} and~\eqref{g4}. This immediately implies the
statement of the Lemma.
\end{proof}
For a polynomial $Q\in\p_n$ define the norm of the Hessian matrix
on the sphere, as defined by ~\eqref{Hes}, at the point $x_0\in
S^d$ by $$ \left\|\nabla^2 Q(x_0)\right\|=\max_{\|y\|=1}| \nabla^2
Q(x_0)\cdot y\cdot y|,$$ where the maximum is taken over vectors
$y$ orthogonal to $x_0$. We will prove the following estimate
\begin{lemma}\label{norm}
For a polynomial $Q\in\p_n$ and point $x_0\in S^d$
 $$ \left\|\nabla^2 Q(x_0)\right\|\leq (3g''_w(1)+g'_w(1))^{1/2}\|Q\|_w.$$
\end{lemma}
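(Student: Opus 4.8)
The plan is to exploit the reproducing kernel of $\p_n$. Recall that $G_x(y)=g_w((x,y))$ satisfies $\langle G_x,Q\rangle_w=Q(x)$ for every $Q\in\p_n$ and every $x\in S^d$. Since $\p_n$ is finite dimensional, the linear functional $Q\mapsto\nabla^2Q(x_0)\cdot y\cdot y$ is represented by some $H\in\p_n$, so that $\nabla^2Q(x_0)\cdot y\cdot y=\langle H,Q\rangle_w$; Cauchy--Schwarz then gives $|\nabla^2Q(x_0)\cdot y\cdot y|\le\|H\|_w\|Q\|_w$, and the whole statement reduces to computing $\|H\|_w^2=\langle H,H\rangle_w$. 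The computation of $\|H\|_w^2$ is the only real work.

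To identify $H$, fix a unit vector $y$ orthogonal to $x_0$ and consider the curve $u(t)=(x_0+ty)/\|x_0+ty\|$ on $S^d$; since $\|x_0+ty\|^2=1+t^2$, this curve is smooth near $t=0$, with $u(0)=x_0$, $u'(0)=y$, $u''(0)=-x_0$ by a direct differentiation. For $Q\in\p_n$ one has $Q(u(t))=Q((x_0+ty)/\|x_0+ty\|)$, whose second derivative in $t$ at $t=0$ is, by the very definition of $\nabla^2$ in \eqref{Hes}, equal to $\nabla^2Q(x_0)\cdot y\cdot y$. On the other hand $Q(u(t))=\langle G_{u(t)},Q\rangle_w$, and $t\mapsto G_{u(t)}$ is a smooth curve in the finite-dimensional space $\p_n$ (as $g_w$ is a polynomial and $u$ is smooth near $0$), so the $t$-derivatives pass through the pairing. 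Hence $H=\frac{d^2}{dt^2}\big|_{t=0}G_{u(t)}\in\p_n$, and $\nabla^2Q(x_0)\cdot y\cdot y=\langle H,Q\rangle_w$.

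Passing the derivatives through the pairing once more and using $\langle G_{u(t)},G_{u(s)}\rangle_w=G_{u(t)}(u(s))=g_w((u(t),u(s)))$, one gets
$$\|H\|_w^2=\langle H,H\rangle_w=\frac{\partial^2}{\partial t^2}\frac{\partial^2}{\partial s^2}\Big|_{t=s=0}g_w((u(t),u(s))).$$
Set $\phi(t,s)=(u(t),u(s))$. From the values of $u,u',u''$ at $0$ we get, at $(t,s)=(0,0)$, $\phi=1$, $\phi_t=\phi_s=0$, $\phi_{tt}=\phi_{ss}=-1$, $\phi_{ts}=1$, $\phi_{tts}=\phi_{tss}=0$ and $\phi_{ttss}=1$ (the third-order derivatives vanish precisely because $y\perp x_0$). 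Expanding $\partial_t^2\partial_s^2(g_w\circ\phi)$ by the chain rule, every monomial carrying a factor $\phi_t$ or $\phi_s$ dies at $(0,0)$ --- in particular all terms involving $g_w'''(1)$ or $g_w''''(1)$ disappear --- and what survives is $2g_w''(1)$ (from $g_w''(1)\phi_{ts}^2$), $g_w''(1)$ (from $g_w''(1)\phi_{tt}\phi_{ss}$) and $g_w'(1)$ (from $g_w'(1)\phi_{ttss}$). Thus $\|H\|_w^2=3g_w''(1)+g_w'(1)$, and combining this with the Cauchy--Schwarz bound and taking the supremum over unit $y\perp x_0$ yields $\|\nabla^2Q(x_0)\|\le(3g_w''(1)+g_w'(1))^{1/2}\|Q\|_w$.

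The main obstacle is the bookkeeping in the fourth-order chain rule for $g_w\circ\phi$; everything else --- that the parametric derivatives commute with $\langle\cdot,\cdot\rangle_w$ (immediate from finite dimensionality and smoothness of $t\mapsto G_{u(t)}$), that $\frac{d^2}{dt^2}\big|_{t=0}Q(u(t))$ is exactly $\nabla^2Q(x_0)\cdot y\cdot y$, and the evaluation of the partials of $\phi$ at the origin --- is routine.
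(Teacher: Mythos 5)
Your proof is correct and follows essentially the same route as the paper's: represent the linear functional $Q\mapsto\nabla^2 Q(x_0)\cdot y\cdot y$ by $H\in\p_n$ via the reproducing kernel, apply Cauchy--Schwarz, and compute $\|H\|_w^2=3g_w''(1)+g_w'(1)$ by differentiating $g_w$ along a curve through $x_0$ with velocity $y$ and acceleration $-x_0$. The only cosmetic differences are your choice of curve $u(t)=(x_0+ty)/\|x_0+ty\|$ in place of the paper's $x_0\cos t+y_0\sin t$ (second-order equivalent at $t=0$), and your packaging of the norm computation as a single mixed fourth-order partial $\partial_t^2\partial_s^2 g_w((u(t),u(s)))$ rather than the paper's two-step route through the explicit formula for $G''(y)$.
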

\begin{proof}Fix a unit vector $y_0$ orthogonal to $x_0$ and
define a curve $x(t)$ on the sphere $S^d$ by $$x(t)=x_0 \cos(t)
+y_0\sin(t).$$ For each $t\in \R$ we consider the polynomial
$G_{x(t)}(y)=g_w((x(t),y))\in \p_n$, which
 has the property $\langle Q,G_{x(t)}\rangle_w= Q(x(t)) $ for all $Q\in\p_n$.
 Setting
 $G''=\frac{d^2}{d t^2}G{x(t)}|_{t=0}$, we have that
 \begin{equation}\label{QP''}
  \nabla^2 Q(x_0) \cdot y_0 \cdot
y_0=\frac{d^2}{d t^2}Q(x(t))|_{t=0}=\langle Q,G''\rangle_w.
\end{equation} Hence
$$\left\|\nabla^2 Q(x_0)\right\|\leq \|G''\|_w\|Q\|_w.$$ It
remains to show that $\|G''\|_w=(3g''_w(1)+g'_w(1))^{1/2}$. Since
$$\frac{d^2}{d t^2} G_{x(t)}(y)= \frac{d^2}{d t^2}
g_w((x(t),y)),$$ we obtain
\begin{equation}\label{P''}G''(y)=(y_0,y)^2g_w''((x_0,y))-(x_0,y)g_w'((x_0,y))
.\end{equation} From (\ref{QP''}) and (\ref{P''}) we get by direct
calculation $$\langle G'',G''\rangle_w=\frac{d^2}{d
t^2}G''(x(t))|_{t=0}=3g_w''(1)+g'_w(1). $$ Lemma 2 is proved.
\end{proof}
\noindent Denote by $B^q$ the closed ball of radius $1$ with
center at $0$ in $\R^q$. To prove the following Lemma~3 we use the
Brouwer fixed point theorem \cite{N}\\ {\bf Theorem B.} Let A be a
closed bounded convex subset of $\mathbb{R}^q$ and $H:A\rightarrow
A$ be a continuous mapping on $A$. Then there exists  some $z\in
A$ such that $H(z)=z$.
\begin{lemma}\label{ball}
Let $F:B^q\to\R^q$ be a continuous map such that
$$F(x)=A(x)+G(x), $$ where $A(x)$ is a linear map and for each
$x\in B^q$
\begin{equation}
\label{ii}
\|A(x)\|\geq \alpha \|x\|
\end{equation}
and
\begin{equation}
\label{ii1}
\|G(x)\|\le\alpha \|x\|/2,
\end{equation}
for some $\alpha>0$. Then, the image of $F$ contains the closed ball of
radius $\alpha/2$ with center at $0$.
\begin{proof}
Take an arbitrary $y$, with $\|y\|\le\alpha/2$. It is sufficient
to show that there exists $x\in B^q$ such that $F(x)=y$. The
inequality~\eqref{ii} implies that $\|A^{-1}(y)\|\le 1/2$. Denote
by $K$ the ball of radius $1/2$ with center $0$. Consider a map
$$H_y(z)=-A^{-1}(G(A^{-1}(y)+z)).$$ By~\eqref{ii} and ~\eqref{ii1}
we obtain that $H_y(K)\subset K$. Hence, by the Brouwer fixed
point theorem, there exists $z\in K$ such that $H_y(z)=z$. This
then implies that $$F(A^{-1}(y)+z)=y.$$
\end{proof}\end{lemma}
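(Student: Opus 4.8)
The plan is to turn the surjectivity claim into a fixed-point problem for the Brouwer theorem (Theorem~B). Fix an arbitrary $y\in\R^q$ with $\|y\|\le\alpha/2$; it suffices to find $x\in B^q$ with $F(x)=y$, i.e.\ with $A(x)+G(x)=y$. First I would observe that $A$ is invertible: by homogeneity the bound \eqref{ii} extends from $B^q$ to all of $\R^q$, so $A$ has trivial kernel, hence (being a linear map on a finite-dimensional space) is bijective with $\|A^{-1}(w)\|\le\|w\|/\alpha$ for every $w\in\R^q$. In particular $\|A^{-1}(y)\|\le1/2$.

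Next I would substitute $x=A^{-1}(y)+z$, which recasts the equation $A(x)+G(x)=y$ as the fixed-point equation $z=H_y(z)$, where $H_y(z):=-A^{-1}\bigl(G(A^{-1}(y)+z)\bigr)$. Let $K$ be the closed ball of radius $1/2$ about $0$. For $z\in K$ we have $\|A^{-1}(y)+z\|\le\tfrac12+\tfrac12=1$, so $A^{-1}(y)+z\in B^q$ and $G$ is defined there, and using \eqref{ii1} together with the operator bound on $A^{-1}$,
\[
\|H_y(z)\|\le\tfrac1\alpha\|G(A^{-1}(y)+z)\|\le\tfrac1\alpha\cdot\tfrac{\alpha}{2}\|A^{-1}(y)+z\|\le\tfrac12 ,
\]
so $H_y$ maps the compact convex set $K$ continuously into itself (continuity follows since $G$ is continuous and $A^{-1}$ is linear). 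Theorem~B then provides $z\in K$ with $H_y(z)=z$.

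Finally I would unwind the fixed point: put $x=A^{-1}(y)+z$, so $\|x\|\le1$ and $x\in B^q$; from $z=H_y(z)$ we get $A(z)=-G(x)$, hence $F(x)=A(x)+G(x)=A(A^{-1}(y))+A(z)+G(x)=y$. Since $y$ ranged over the entire ball of radius $\alpha/2$, that ball lies in the image of $F$. The one step that needs care is verifying the self-map property $H_y(K)\subseteq K$; this is precisely where both hypotheses enter, \eqref{ii} through the invertibility and norm bound on $A^{-1}$ and \eqref{ii1} through the displayed estimate, while everything else is routine bookkeeping.
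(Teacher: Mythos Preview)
Your proof is correct and follows essentially the same route as the paper's: you introduce the same substitution $x=A^{-1}(y)+z$, the same map $H_y(z)=-A^{-1}(G(A^{-1}(y)+z))$, verify it sends the ball $K$ of radius $1/2$ into itself, and invoke Brouwer. The only difference is that you spell out in more detail why $A$ is invertible and why $H_y(K)\subseteq K$, which the paper leaves implicit.
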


To prove the principal Lemma~\ref{mainl} we also need a result
which is an easy corollary of Theorem 3.1 in~\cite{MNW}\\ {\bf
Theorem MNW.} There exist constants $r_d$ and $N_d$ such that for
each area-regular partition $\mathcal{R}=\{R_1,\ldots,R_N\}$ with
$\|\mathcal{R}\|<\frac{r_d}m$, each collection of points $x_i\in
R_i$, $i=1,\ldots,N$ and each algebraic polynomial $P$ of total
degree $m>N_d$ the following inequality
\begin{equation}
\label{Mhaskar}
\frac12\int_{S^d}|P(x)|d\mu_d(x)<\frac1N\sum_{i=1}^N|P(x_i)|<\frac32\int_{S^d}|P(x)|d\mu_d(x)
\end{equation}
holds.

Consider the map
 $\Phi:(S^d)^N\to \p_n $ defined by $$\xymatrix{
(x_1,\ldots,x_N)\ar[r]^>(0.8){\Phi} &
\frac{G_{x_1}+\ldots+G_{x_N}}{N}} .$$
\begin{lemma}\label{mainl} Let $x_1,\ldots,x_N \in S^d$ be the collection of points and
 $\mathcal{R}=\{R_1,\ldots,R_N\}$ an area-regular partition such
 that $x_i\in R_i$
  and $\|\mathcal{R}\|\leq \frac {r_d} {2n}$. Then the image of the map $\Phi$ contains a ball of radius
$\rho\geq A_dn^{(-d-2)/2}$ with  center at the point
$G=\frac{G_{x_1}+\ldots+G_{x_N}}{N}$, where $A_d$ is a
sufficiently small constant, depending only on $d$.
\end{lemma}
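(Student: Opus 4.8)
The plan is to realize $\Phi$ locally as a perturbed linear map on a product of small balls and then apply Lemma~\ref{ball}. First I would set up coordinates: for each $i$ pick an orthonormal basis $e_{i,1},\dots,e_{i,d}$ of the tangent space $T_{x_i}S^d$, and parametrize a neighborhood of $x_i$ in $S^d$ by the exponential-type map $t\mapsto x_i(t)$ that moves $x_i$ along the great circle in direction $\sum_j t_j e_{i,j}$ (normalized). This gives a map from a ball in $\R^{dN}$ into $(S^d)^N$ sending the origin to $(x_1,\dots,x_N)$. Composing with $\Phi$ yields a map $\widetilde\Phi:B^{dN}_\delta\to\p_n$ (where $\delta$ is chosen small, of order $1/n$ say, so the whole parametrized patch stays inside $R_i$ up to rescaling), with $\widetilde\Phi(0)=G$. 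I would then Taylor-expand $\widetilde\Phi$ to first order: the linear part $A$ is the differential $D\widetilde\Phi(0)$, acting on a tangent vector $(t_1,\dots,t_N)$ by $\frac1N\sum_i \sum_j t_{i,j}\,\partial_{e_{i,j}}G_{x_i}$ (the spherical directional derivatives of $G_{x_i}$), and the remainder $\mathcal G$ is controlled by the second derivatives of $t\mapsto G_{x_i(t)}$, i.e. by $\|G''\|_w$-type quantities bounded via Lemma~\ref{norm} and \eqref{g5} by $O(n^{(d+2)/2})$ per point. So $\|\mathcal G(t)\|_w \le C_d\, n^{(d+2)/2}\,\delta\,\|t\|$ on $B^{dN}_\delta$, hence by shrinking $\delta$ to a small multiple of $n^{-(d+2)/2}\alpha$ we force $\|\mathcal G(t)\|_w\le \alpha\|t\|/2$ once we know the lower bound $\alpha$ on $A$.

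The substantive step is the lower bound $\|A(t)\|_w \ge \alpha\|t\|$ for a suitable $\alpha$ of order $n^{(d+2)/2}$ (matching, so that $\rho=\alpha/2\cdot(\text{scale})$ comes out as $A_d n^{-(d+2)/2}$ after accounting for the $\delta$-rescaling that turns $B^{dN}_\delta$ into $B^{dN}$). To get this I would compute $\|A(t)\|_w^2$ and bound it below. The natural route: for each test polynomial $Q\in\p_n$ we have $\langle A(t),Q\rangle_w=\frac1N\sum_i\sum_j t_{i,j}\,\partial_{e_{i,j}}Q(x_i)$, i.e. $A(t)$ encodes a weighted sample of the spherical gradient of $Q$ at the nodes $x_i$. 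Choosing $Q$ cleverly — essentially $Q$ whose gradient at $x_i$ points in the direction $t_i$ with the right magnitude, or more robustly using that $\langle A(t),A(t)\rangle_w = \frac1{N^2}\sum_{i,i'}\sum_{j,j'} t_{i,j}t_{i',j'}\langle \partial_{e_{i,j}}G_{x_i},\partial_{e_{i',j'}}G_{x_{i'}}\rangle_w$ and that the diagonal terms $i=i'$ give $\frac1{N^2}\sum_i \sum_j t_{i,j}^2 \cdot (\text{something}\ \asymp g_w''(1)\text{ or }g_w'(1))$ — one extracts $\|A(t)\|_w^2\gtrsim \frac{1}{N^2}(g_w'(1) \text{ or } g_w''(1))\|t\|^2$ from the diagonal, provided the off-diagonal terms do not cancel it. Here is where Theorem MNW enters: the off-diagonal terms are a quadratic form that can be written as an integral/sum of $|\nabla Q(x_i)|^2$-type expressions against the reproducing kernel, and the Marcinkiewicz–Zygmund inequality \eqref{Mhaskar} (applicable since $\|\mathcal R\|\le r_d/(2n)$ with $m=2n$) lets us compare $\frac1N\sum_i(\cdots)$ with $\int_{S^d}(\cdots)\,d\mu_d$, the latter being computable exactly via \eqref{lap} and the spectral data of $\Delta_w$; the continuous version has no bad cancellation, so the discrete one is within a factor $2$.

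The main obstacle I expect is precisely this lower bound on the linear part $A$: making the heuristic "diagonal dominates, MNW kills the cross terms" rigorous requires writing $\|A(t)\|_w^2$ in a form where MNW applies to an actual polynomial of degree $\le 2n$ with nonnegative or sign-definite structure. The clean way is: given $t$, let $P_t\in\p_n$ be the polynomial with $\nabla P_t(x_i)$ proportional to $t_i$ at the nodes (or take $P_t = N\cdot\langle A(t),\cdot\rangle_w$-dual, i.e. $P_t=N A(t)$ itself viewed in $\p_n$), and then $\|A(t)\|_w = \frac1N\|P_t\|_w$ while $\langle A(t),A(t)\rangle_w \ge \frac1{N}\sum_i (\nabla P_t(x_i),t_i)\cdot\frac1N$ — choosing the scaling so this reads as a sampled gradient norm, bound $\frac1N\sum_i|\nabla P_t(x_i)|^2$ below using MNW applied to $|\nabla P_t|^2$ (degree $\le 2n-2$), and bound $\int_{S^d}|\nabla P_t|^2 d\mu_d$ below in terms of $\|P_t\|_w^2$ and the smallest eigenvalue-type constant of the Green's-identity form \eqref{lap}, which for our weights $w_k=k(k+d-1)$ is bounded below by a constant. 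Then solve the resulting inequality for $\|A(t)\|_w$ to get $\alpha\asymp \sqrt{g_w''(1)}/N \asymp n^{(d+2)/2}/N$; after undoing the $\delta\asymp n^{-(d+2)/2}$ rescaling of the domain, Lemma~\ref{ball} yields a ball of radius $\rho\gtrsim \alpha\delta/2 \asymp A_d n^{-(d+2)/2}$ in the image of $\Phi$ centered at $G$, as claimed. The continuity of $\Phi$, and of its composition with the chart, is clear since $g_w$ and its derivatives are polynomials.
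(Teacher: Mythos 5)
The outline (Taylor expand $\Phi$, bound the linear part below and the quadratic remainder above, then invoke Lemma~\ref{ball}) is right, but there is a fatal gap in the way you set up the domain. You parametrize the perturbation by $t\in B^{dN}_\delta\subset\R^{dN}$ and then ask for $\|A(t)\|_w\ge\alpha\|t\|$ for all such $t$. But $A$ maps $\R^{dN}$ into $\p_n$, and $dN$ is vastly larger than $\dim\p_n\asymp n^d$ (indeed $N\gtrsim n^{2d(d+1)/(d+2)}\gg n^d$), so $A$ has a huge kernel and no such coercivity can hold; Lemma~\ref{ball} moreover requires domain and codomain of the same dimension. Your proposed fix, ``take $P_t=NA(t)$,'' does not repair this: if $t\in\ker A$ then $P_t=0$, and in general there is no reason that $\nabla P_t(x_i)$ should point along $t_i$, so the inequality you want between $\frac1{N^2}\sum_i(\nabla P_t(x_i),t_i)$ and $\frac1{N}\sum_i\|\nabla P_t(x_i)\|^2$ fails. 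The Marcinkiewicz--Zygmund step also needs the sampled quantity to be an actual polynomial of degree $\le 2n$ with a sign; for arbitrary $t$ the quadratic form you write down is not of that shape.

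The paper's fix is exactly the idea you circle around but do not land on: instead of parametrizing by $\R^{dN}$, parametrize the perturbation by $\p_n$ itself. Define $X:\p_n\to(S^d)^N$ by moving each $x_i$ along the great circle in direction $\nabla P(x_i)$ through arclength $\|\nabla P(x_i)\|$, and set $L=\Phi\circ X:\p_n\to\p_n$. Now domain and codomain match, the linear part $L'$ is a self-map of $\p_n$ with $\langle L'(P),Q\rangle_w=\frac1N\sum_i(\nabla Q(x_i),\nabla P(x_i))$, and testing against $Q=P$ produces precisely $\frac1N\sum_i\|\nabla P(x_i)\|^2$ --- a nonnegative polynomial sample to which \eqref{Mhaskar} applies, giving $\|L'(P)\|_w\ge\frac12\|P\|_w$ via \eqref{lap} and the choice $w_k=k(k+d-1)$. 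The quadratic remainder is then handled essentially as you sketch, via Lemma~\ref{norm} and \eqref{g4}--\eqref{g5}, and Lemma~\ref{ball} on a ball of radius $\asymp n^{-(d+2)/2}$ in $\p_n$ gives the claimed $\rho$. So the missing ingredient is the choice of a $\dim\p_n$-dimensional slice of the perturbation space, picked so that the linear response is the MZ-controllable gradient sampling.
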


\begin{proof}
 For each polynomial $P\in\p_n$
 consider
the circles  on $S^d$ given by $$\tilde{x}_i(t)=x_i \cos( \|\nabla
P(x_i)\| t)+y_i \sin( \|\nabla P(x_i)\| t),$$ where
$y_i=\frac{\nabla P(x_i)}{\|\nabla P(x_i)\|}$, $
i=\overline{1,\ldots, N} $.   Define the map $X:\p_n\to(S^d)^N$ by
$$X(P)=(x_1(P),\ldots,
x_N(P)):=(\tilde{x}_1(1),\ldots,\tilde{x}_N(1)).$$
 Now we will consider the composition  $L=\Phi\circ X:\p_n\to\p_n$
 which takes the form
 $$L(P)=\frac{G_{x_1(P)}+\ldots+G_{x_N(P)}}{N}.$$
 For each $Q\in \p_n$ one can take the Taylor expansion
 \begin{equation}\label{taylor} \langle G_{\tilde{x}_i(t)},Q\rangle_w =Q(\tilde{x}_i(t))=Q(x_i)+\frac{d}{d
t}Q(\tilde{x}_i(0))t+
 \frac 12\cdot \frac{d^2}{d t^2}Q(\tilde{x}_i(t_i))t^2 , \;\;t_i\in[0,t].
 \end{equation}
 Hence, we can represent the function $L(P)$ in the form $$L(P)=L(0)+L'(P)+L''(P). $$
 Here $L'(P)$ is the unique polynomial in $\p_n$ satisfying
 $$\langle L'(P),Q\rangle_w=\frac{1}{N}\sum_{i=1}^N
 (\nabla Q(x_i),\nabla P(x_i))\;\;\mbox{for all}\;\; Q\in \p_n,$$  and $$ L''(P)=L(P)-L(0)-L'(P). $$
First, for each $P\in\p_n$ we will estimate the norm of $L'(P)$
from below. We have
 $$\|L'(P)\|_w\geq\frac{1}{\|P\|_w}\cdot \langle L'(P),P\rangle_w=\frac{1}{\|P\|_w}\cdot\frac{1}{N}\sum_{i=1}^N
 (\nabla P(x_i),\nabla P(x_i)).$$
 Applying~\eqref{Mhaskar} to the polynomial $(\nabla P,\nabla P)$ of degree $\le 2n$, we get $$\frac{1}{N}\sum_{i=1}^N (\nabla P(x_i),\nabla P(x_i))
 \geq \frac12\int_{S^d}(\nabla P(x),\nabla P(x))d\mu_d(x) . $$
 On the other hand, by~\eqref{lap} we have
 $$\int_{S^d}(\nabla P(x),\nabla P(x))d\mu_d(x)=\langle P,\Delta_wP\rangle_w=\|P\|^2_w .$$
 This gives us the estimate \begin{equation}\label{l'} \|L'(P)\|_w\geq \frac12\|P\|_w. \end{equation}
 Now we will estimate the norm of $L''(P)$ from above. By (\ref{taylor}) we have
 $$\langle L''(P),Q\rangle_w=\frac{1}{2N}\sum_{i=1}^N \frac{d^2}{dt^2}Q(\tilde{x}_i(t_i)),
 $$for some $t_i\in [0,1]$. Since the following equality holds
 $$\frac{d^2}{dt^2}Q(\tilde{x}_i(t))=\nabla^2 Q\cdot\frac{d \tilde{x}_i(t)}
 {d t}\cdot\frac{d \tilde{x}_i(t)}{d t},$$
 Lemma \ref{norm}  implies that
 $$|\frac{d^2}{dt^2}Q(\tilde{x}_i(t))|\leq (3g_w''(1)+g_w'(1))^{1/2}\|\frac{d \tilde{x}_i}{d
 t}\|^2\cdot\|Q\|_w.
 $$ It follows from the identity$$\|\frac{d \tilde{x}_i}{d
 t}(t)\|=\|\nabla P(x_i)\| $$ and estimates \eqref{g4},
\eqref{g5}
 that
 $$|\frac{d^2}{dt^2}Q(\tilde{x}_i(t))|\leq
  c_{3d} n^{(d+2)/2}\|\nabla P(x_i)\|^2\cdot\|Q\|_w.
 $$
This inequality yields immediately
 $$|\langle L''(P),Q \rangle_w|=|\frac {1}{2N}\sum_{i=1}^N \frac{d^2}{dt^2}Q(\tilde{x}_i(t_i))
|\leq \frac{c_{3d} n^{(d+2)/2}\|Q\|_w}
 {N}\sum_{i=1}^N\|\nabla P(x_i)\|^2.$$
 Applying again~\eqref{Mhaskar}, we obtain
 $$\frac{1}{N}\sum_{i=1}^N\|\nabla P(x_i)\|^2\leq
 \frac32\|P\|_w^2.
 $$
 So, for each $Q\in \p_n$ we have that
 $$ |\langle L''(P),Q \rangle_w |\leq \frac32 c_{3d} n^{(d+2)/2}
 \|P\|_w^2\cdot\|Q\|_w.$$ Thus, we get \begin{equation}\label{l''} \|L''(P)\|_w\leq
 \frac32 c_{3d} n^{(d+2)/2}
 \|P\|_w^2.
 \end{equation}
 Lemma \ref{ball} combined with inequalities (\ref{l'}) and (\ref{l''})
 implies that the image of $L$, and hence the image of $\Phi$, contains a ball of radius
 $\rho\geq A_dn^{(-d-2)/2}$ around $L(0)=G$, where $A_d=1/6c_{3d}$, proving the lemma.
\end{proof}
\begin{proof}[Proof of Theorem 1.]
By Lemma 1, there exists an area-regular partition
$\mathcal{R}=\left\{R_1,\ldots, R_N\right\}$ such that
$\|\mathcal{R}\|\leq c_{2d}N^{1/d}$, and a collection of points
$x_i\in R_i$, $i=1,\ldots,N$ such that
$$\left\|\frac{G_{x_1}+\ldots+G_{x_N}}{N}\right\|_w\leq\frac {b_d
n^{d/2}}{N^{1/2+1/d}}.$$ Take $N$ large enough such that $N>N_d$
and $\frac{c_{2d}}{N^{1/d}}<\frac{r_d}{2n}$, where $N_d$ and $r_d$
are defined by Theorem MNW. Applying Lemma~\ref{mainl} to the
partition $\mathcal{R}$ and the collection of points $x_1,\ldots,
x_N$, we obtain immediately that $G_{y_1}+\ldots+G_{y_N}=0$ for
some $y_1,\ldots,y_N\in S^d$ if $$ \frac {b_d
n^{d/2}}{N^{1/2+1/d}}<A_dn^{(-d-2)/2}. $$ So, we can choose a
constant $c_d$ such that the last inequality holds for all
$N>c_dn^\frac{2d(d+1)}{d+2}$. Theorem 1 is proved.
\end{proof}

\end{document}